\def\@sect#1#2#3#4#5#6[#7]#8{\ifnum #2>\c@secnumdepth
     \def\@svsec{}\else 
     \refstepcounter{#1}\edef\@svsec{\csname the#1\endcsname{.}\hskip 1em }\fi
     \@tempskipa #5\relax
      \ifdim \@tempskipa>\z@ 
        \begingroup #6\relax
          \@hangfrom{\hskip #3\relax\@svsec}{\interlinepenalty \@M #8\par}
        \endgroup
       \csname #1mark\endcsname{#7}\addcontentsline
         {toc}{#1}{\ifnum #2>\c@secnumdepth \else
                      \protect\numberline{\csname the#1\endcsname}\fi
                    #7}\else
        \def\@svsechd{#6\hskip #3\@svsec #8\csname #1mark\endcsname
                      {#7}\addcontentsline
                           {toc}{#1}{\ifnum #2>\c@secnumdepth \else
                             \protect\numberline{\csname the#1\endcsname}\fi
                       #7}}\fi
     \@xsect{#5}}
\newcommand{\pend}{\hspace*{\fill} $\square$}
\newtheorem{lemma}{Lemma}[section]
\newtheorem{theorem}[lemma]{Theorem}
\newtheorem{corollary}[lemma]{Corollary}
\theoremstyle{definition}
\newtheorem{remark}{Remark}
\begin{document}

\title{A Note on a Nearly Uniform Partition into 
Common Independent Sets of Two Matroids}

\author{{ Satoru Fujishige}\footnote{
RIMS, 
Kyoto University, Kyoto 606-8502, Japan. 
\ \ E-mail: {fujishig@kurims.kyoto-u.ac.jp}
},\ \ 
{ Kenjiro Takazawa}\footnote{
Hosei University, Tokyo 184-8584, Japan.
\ \ E-mail: {takazawa@hosei.ac.jp}},\ \ 
and Yu Yokoi\footnote{
National Institute of Informatics, Tokyo 101-8430, Japan. 
\ \ E-mail: {yokoi@nii.ac.jp}
}
}

\date{September, 2019}

\maketitle

\begin{abstract}

The present note is a strengthening of a recent paper by K. Takazawa 
and Y. Yokoi 
(A generalized-polymatroid approach to disjoint common independent sets 
in two matroids, {\it Discrete Mathematics} (2019)). 
For given two matroids on $E$, under the same assumption in their paper 
to guarantee the existence of a partition of $E$ into $k$
common independent sets of the two matroids, we show that 
there exists a nearly 
uniform partition $\mathcal{P}$ of $E$ into $k$ common independent sets, 
where the difference of the cardinalities of any two sets 
in $\mathcal{P}$ is at most one.

\end{abstract}

\noindent
{\bf Keywords}: matroid, common independent sets, nearly uniform partition

\section{Introduction}
\label{sec:1}

K. Takazawa and Y. Yokoi \cite{TakazawaYokoi2018} have very recently 
showed a new approach to the problem of partitioning 
the common ground set of two matroids 
into common independent sets by means of generalized polymatroids. 
They successfully give a unifying view on some results of J. Davies 
and C. McDiarmid \cite{DaviesMcDiarmid1976} and D. Kotlar and R. Ziv 
\cite{KotlarZiv2005} and extend them by the generalized-polymatroid 
approach.

A partition $\mathcal{P}$ of a finite nonempty set $E$ is called 
{\it nearly uniform} if 
 the cardinality difference of every pair of sets 
in $\mathcal{P}$ is at most one.
Researchers' attention has been drawn to the existence of a nearly 
uniform partition of the ground set of a combinatorial system 
into disjoint objects of the system 
such as branchings (\cite[Sec.~53.6]{Schrijver2003}) and matchings 
(\cite{DaviesMcDiarmid1976,KiralyYokoi2018}).
In the present note we show that the generalized-polymatroid 
approach in \cite{TakazawaYokoi2018} reveals the existence of a nearly 
uniform partition $\mathcal{P}$ of $E$ into common independent sets 
of two matroids under the same assumption in \cite{TakazawaYokoi2018}.

In Section~\ref{sec:2} we describe the result of Takazawa and 
Yokoi \cite{TakazawaYokoi2018} in a general form, which is basically
a dynamic programming formulation. Then, in Section~\ref{sec:nearly},
under the same assumption 
in the paper \cite{TakazawaYokoi2018}
to guarantee the existence of a partition of $E$ into $k$
common independent sets of the two matroids, 
we show that there exists a nearly 
uniform partition $\mathcal{P}$ of $E$ into $k$ common independent sets, 
where the difference of the cardinalities of any two sets 
in $\mathcal{P}$ is at most one.
Section~\ref{sec:4} gives some concluding remarks.

\section{The Generalized-Polymatroid Approach of Takazawa and Yokoi}
\label{sec:2}

We follow the definitions and notation given in \cite{TakazawaYokoi2018}
(and in our Appendix).
A brief survey about fundamental facts about matroids,  
polymatroids, generalized polymatroids, and submodular/supermodular functions 
is given in the appendix for readers' convenience. 
Also see \cite{Frank2011,Fuji2005,Oxley2011,Schrijver2003,Welsh1976}. 

Let $E$ be a nonempty finite set. For each $i=1,2$ 
let ${\bf M}_i = (E,\mathcal{I}_i)$ be a matroid on $E$ 
with $\mathcal{I}_i\subseteq 2^E$ being a family of independent sets.
For a given positive integer $k\ge 2$  
let ${\bf M}_i^k=(E,\mathcal{I}_i^k)$ be the union matroid of $k$ copies 
of ${\bf M}_i$ for each $i=1,2$, and we assume 
that $E\in\mathcal{I}_1^k\cap\mathcal{I}_2^k$. 

Now, consider the problem of partitioning the ground set $E$ of 
the two matroids 
${\bf M}_i$ $(i=1,2)$ into $k$ common independent sets as follows:
\begin{description}
\item[(P):] Find a partition $\mathcal{P}=\{X_1,\cdots,X_k\}$ 
of $E$ into $k$ disjoint subsets $X_j\subseteq E$ $(j=1,\cdots, k)$
such that $X_j\in\mathcal{I}_1\cap\mathcal{I}_2$ for all $j=1,\cdots,k$.
\end{description}
Here we allow empty component $X_j=\emptyset\in\mathcal{I}_1\cap\mathcal{I}_2$,
just by a technical reason for the arguments in the sequel.
(It should be noted that if we can partition $E$ into $k$ 
possibly empty common independent
sets, then we can partition $E$ into $k$ nonempty common independent
sets when $k\le|E|$.) 

Let $\rho$ be the rank function of ${\bf M}=(E,\mathcal{I})$, 
$\mathcal{I}^k$ the union matroid ${\bf M}^k$ of 
$k$ copies of ${\bf M}=(E,\mathcal{I})$,
$\rho^{k}$ the rank function
of the union matroid ${\bf M}^k=(E,\mathcal{I}^k)$,
 $\rho^\#$ the dual supermodular function of $\rho$, 
${\rm P}(\rho)$ the submodular polyhedron associated with submodular $\rho$, 
and ${\rm P}(\rho^\#)$ the supermodular polyhedron 
associated with supermodular $\rho^\#$ (see Appendix).
Also for any family $\mathcal{F}$ of subsets of $E$ denote 
by ${\rm Conv}(\mathcal{F})$  
the convex hull of characteristic vectors $\chi_X\in\mathbb{R}^E$ for
all $X\in\mathcal{F}$.

\begin{theorem}[\cite{TakazawaYokoi2018}]\label{th:TY1}
Let ${\bf M}=(E,\mathcal{I})$ be a matroid with $E\in\mathcal{I}^k$. 
Define
\begin{equation}\label{eq:fa}
 \mathcal{F}=\{X\mid \mbox{$X\in\mathcal{I}$, $E\setminus X$ can be partitioned into $k-1$ sets in $\mathcal{I}$}\}.
\end{equation}
Then we have
\begin{equation}\label{eq:fc}
  {\rm Conv}(\mathcal{F})
  ={\rm P}(\rho)\cap{\rm P}((\rho^{k-1})^\#)\subseteq [0,1]^E.
\end{equation}
\end{theorem}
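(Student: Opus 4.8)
The plan is to read off the two defining conditions of $\mathcal{F}$ as two systems of linear inequalities, one submodular and one supermodular, to dispatch the easy inclusion by convexity, and then to obtain the reverse inclusion from an integrality argument supplied by generalized-polymatroid theory. First I would record the constraint translation. A set $X$ is independent in $\mathbf{M}$ exactly when $\chi_X$ lies in the independent-set polytope of $\mathbf{M}$, that is, when $\chi_X(S)=|X\cap S|\le\rho(S)$ for every $S\subseteq E$ together with $\chi_X\ge 0$; these are precisely the inequalities defining $\mathrm{P}(\rho)$. Dually, $E\setminus X$ can be partitioned into $k-1$ members of $\mathcal{I}$ exactly when $\chi_{E\setminus X}=\mathbf{1}-\chi_X$ lies in the independent-set polytope of the union matroid $\mathbf{M}^{k-1}$, i.e.\ when $|S|-\chi_X(S)=\chi_{E\setminus X}(S)\le\rho^{k-1}(S)$ for every $S$, which rearranges to the supermodular lower-bound inequalities $\chi_X(S)\ge|S|-\rho^{k-1}(S)$ cutting out $\mathrm{P}((\rho^{k-1})^{\#})$. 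Hence $X\in\mathcal{F}$ if and only if $\chi_X$ satisfies both families, and the inclusion $\mathrm{Conv}(\mathcal{F})\subseteq\mathrm{P}(\rho)\cap\mathrm{P}((\rho^{k-1})^{\#})$ follows immediately by convexity. The containment in $[0,1]^E$ is read off the singleton inequalities: $x_e\le\rho(\{e\})\le 1$ from the submodular side, and $x_e\ge 1-\rho^{k-1}(\{e\})\ge 0$ from the supermodular side, where $E\in\mathcal{I}^k$ guarantees that there are no loops, so $\rho^{k-1}(\{e\})=1$.

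The substance is the reverse inclusion. Here the idea is that $\mathrm{P}(\rho)\cap[0,1]^E$ is the independent-set polytope of $\mathbf{M}$, while $\mathrm{P}((\rho^{k-1})^{\#})\cap[0,1]^E$ is the spanning-set polytope of the dual matroid $(\mathbf{M}^{k-1})^{*}$; the computation $r_{(\mathbf{M}^{k-1})^{*}}(E)-r_{(\mathbf{M}^{k-1})^{*}}(E\setminus S)=|S|-\rho^{k-1}(S)$ identifies the two supermodular right-hand sides. Both are integral polytopes, indeed integral generalized polymatroids. I would then invoke the fundamental integrality theorem of generalized-polymatroid theory—that the intersection of two integral generalized polymatroids is again an integral polytope (equivalently, that the combined inequality system is box-totally-dual-integral, the polymatroid/matroid-intersection theorem of Edmonds)—to conclude that every vertex of $\mathrm{P}(\rho)\cap\mathrm{P}((\rho^{k-1})^{\#})$ is a $0/1$ vector. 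Since the polytope lies in $[0,1]^E$, its vertices are characteristic vectors $\chi_X$, and by the constraint translation above such a vertex lies in the polytope precisely when $X\in\mathcal{F}$; therefore the polytope equals the convex hull of $\{\chi_X:X\in\mathcal{F}\}$, which is exactly the reverse inclusion.

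I expect the integrality of the intersection to be the main obstacle, and the place where the generalized-polymatroid machinery is genuinely needed: the two systems do not arise from a single matroid, and in fact the naive pair $(\,|\cdot|-\rho^{k-1},\ \rho\,)$ need not satisfy the paramodular inequalities—one can already see this for $\mathbf{M}=U_{1,2}$ with $k=2$—so a single-polyhedron (one g-polymatroid) description is unavailable and the intersection theorem for two separate g-polymatroids is essential. A secondary point requiring care is the bookkeeping in the complementation $x\mapsto\mathbf{1}-x$, which turns the lower-bound (spanning) condition for $(\mathbf{M}^{k-1})^{*}$ into the upper-bound (independence) condition for $\mathbf{M}^{k-1}$; carrying this out cleanly lets one present the intersection in the standard matroid-intersection form and certifies that no fractional vertices are introduced.
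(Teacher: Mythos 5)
Your proof is correct, and it is essentially the intended argument: the note itself contains no proof of Theorem~\ref{th:TY1} (it is imported from \cite{TakazawaYokoi2018}), and the route you take---translate $X\in\mathcal{I}$ into the system $x(S)\le\rho(S)$, translate ``$E\setminus X$ partitionable into $k-1$ independent sets'' into $x(S)\ge |S|-\rho^{k-1}(S)$ via complementation, get one inclusion by convexity, and get the reverse inclusion because the intersection of two integral generalized polymatroids is an integral polytope whose $0/1$ vertices must be characteristic vectors of members of $\mathcal{F}$---is exactly the generalized-polymatroid mechanism that Remark~\ref{re:1}, Remark~\ref{re:2} and the Appendix presuppose.

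Two caveats. First, a notational point you only half-acknowledge: the inequalities you derive, $x(S)\ge|S|-\rho^{k-1}(S)$, define ${\rm P}(((\rho^{k-1})^{*})^{\#})$ in the Appendix's notation (see (\ref{eq:p13})), not ${\rm P}((\rho^{k-1})^{\#})$ as literally defined by (\ref{eq:p7}); under the literal reading $(\rho^{k-1})^{\#}(S)=\rho^{k-1}(E)-\rho^{k-1}(E\setminus S)$, the right-hand side of (\ref{eq:fc}) would be the \emph{spanning}-set side of ${\bf M}^{k-1}$ (cf.\ (\ref{eq:p11})) and the claimed equality would already fail for the free matroid on two elements with $k=2$, where ${\rm Conv}(\mathcal{F})=[0,1]^{E}$ while ${\rm P}(\rho)\cap{\rm P}((\rho^{1})^{\#})=\{(1,1)\}$. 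Your dual-matroid computation $r_{({\bf M}^{k-1})^{*}}(E)-r_{({\bf M}^{k-1})^{*}}(E\setminus S)=|S|-\rho^{k-1}(S)$ is precisely $((\rho^{k-1})^{*})^{\#}$, i.e.\ you silently adopted the co-spanning reading of Remark~\ref{re:1}, which is the only reading under which the theorem is true; this is a defect of the statement's notation rather than of your argument, but the discrepancy between the two labels should be stated explicitly rather than elided. Second, your aside that paramodularity already fails for ${\bf M}=U_{1,2}$, $k=2$ is false: there $\rho=\min\{|\cdot|,1\}$ and $g=|\cdot|-\rho$ satisfy the cross inequality (\ref{eq:p8}) for every pair $X,Y$, and ${\rm Conv}(\mathcal{F})$ is the segment joining $(1,0)$ and $(0,1)$, a base polyhedron and hence a generalized polymatroid. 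Matroids for which ${\rm Conv}(\mathcal{F})$ is \emph{not} a generalized polymatroid do exist---this is exactly why \cite{TakazawaYokoi2018} must restrict the matroid classes in the two-matroid setting of Theorem~\ref{th:TY2}---but $U_{1,2}$ is not a witness. Since that claim is decorative and never used, the error does not affect the validity of your proof.
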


\begin{remark}\label{re:1} 
Note that $E\setminus X$ can be partitioned into $k-1$ sets in $\mathcal{I}$
if and only if $X$ is a co-spanning set of the union matroid ${\bf M}^{k-1}
=(E,\mathcal{I}^{k-1})$ (see Appendix). 
In Theorem \ref{th:TY1} the right-hand side of 
(\ref{eq:fc}) is the intersection of the submodular polyhedron 
${\rm P}(\rho)$ and the supermodular polyhedron 
${\rm P}((\rho^{k-1})^\#)$, which is nonempty 
by the assumption that $E\in\mathcal{I}^k$ (implying 
$(\tfrac{1}{k},\cdots,\tfrac{1}{k})\in 
{\rm P}(\rho)\cap{\rm P}((\rho^{k-1})^\#)$) and is integral.

 Hence a set $X\in\mathcal{F}$ can be 
found efficiently and we can further apply this process 
for $k\gets k-1$,  $E\gets E\setminus X$
and ${\bf M}\gets {\bf M}^E$ (the restriction of ${\bf M}$ on the updated $E$).
We can repeat this process to obtain a partition $\{X_1,\cdots,X_k\}$ 
of $E$ into $k$ independent sets $X_j\in\mathcal{I}$ $(j=1,\cdots,k)$.
Though we have more direct, efficient algorithms to find a partition 
$\{X_1,\cdots,X_k\}$ 
of $E$ into independent sets $X_j\in\mathcal{I}$ $(j=1,\cdots,k)$, 
Theorem \ref{th:TY1} gives a basis for the generalized-polymatroid 
approach to Problem ${\bf (P)}$ of Takazawa and Yokoi \cite{TakazawaYokoi2018}.
\pend
\end{remark}

Now we have the following theorem, based on Theorem~\ref{th:TY1}.

\begin{theorem}[\cite{TakazawaYokoi2018}]\label{th:TY2}
Consider two matroids ${\bf M}_i$ $(i=1,2)$ such  
that $E\in\mathcal{I}_1^k\cap\mathcal{I}_2^k$.
Let $\ell\in\{0,1,\cdots,k-1\}$ and let $\{X_1,\cdots,X_{\ell}\}$ be a set of 
disjoint $\ell$ common independent sets of ${\bf M}_i$ $(i=1,2)$.\footnote{
When $\ell=0$, regard $\{X_1,\cdots,X_{\ell}\}$ as an empty family and
$\bigcup_{j=1}^{\ell} X_j=\emptyset$.}
Putting $F=E\setminus \bigcup_{j=1}^{\ell} X_j$, define
for each $i=1,2$ 
\begin{equation}\label{eq:faa}
 \mathcal{F}^{\ell}_i(F)=\{X\subseteq F\mid 
        \mbox{$X\in\mathcal{I}_i$, $F\setminus X$ can be partitioned into $k-\ell-1$ sets in $\mathcal{I}_i$}\}.
\end{equation}
Then we have
\begin{equation}\label{eq:fd}
{\rm Conv}(\mathcal{F}_1^{\ell}(F)\cap\mathcal{F}_2^{\ell}(F))\subseteq
{\rm P}(\rho_1^F)\cap{\rm P}(((\rho_1^F)^{k-\ell-1})^\#)
\cap{\rm P}(\rho_2^F)\cap{\rm P}(((\rho_2^F)^{k-\ell-1})^\#),
\end{equation}
where $\rho_i^F$ is the rank function of the restriction of ${\bf M}_i$
on $F$.

If the intersection of the four polyhedra on the right-hand side of 
{\rm (\ref{eq:fd})} contains an integral point, i.e., a characteristic
vector $\chi_{X^*}$ of some $X^*\subseteq F$, then we have  
$X^*\in\mathcal{F}_1^{\ell}(F)\cap\mathcal{F}_2^{\ell}(F)$.
In particular, if the intersection of the four polyhedra 
on the right-hand side of {\rm (\ref{eq:fd})} is integral, then 
the inclusion relation {\rm (\ref{eq:fd})}
 holds with equality and 
there exists a set $X\in\mathcal{F}_1^{\ell}(F)\cap\mathcal{F}_2^{\ell}(F)$.
\end{theorem}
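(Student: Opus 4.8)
The plan is to derive the whole statement from Theorem~\ref{th:TY1} applied separately to the two restricted matroids. First I would observe that $\mathcal{F}_i^{\ell}(F)$ is exactly the family $\mathcal{F}$ of Theorem~\ref{th:TY1}, instantiated with the restriction of $\mathbf{M}_i$ to $F$ in place of $\mathbf{M}$ and with $k-\ell$ in place of $k$: the two conditions defining $\mathcal{F}_i^{\ell}(F)$ in \eqref{eq:faa}---that $X\in\mathcal{I}_i$ and that $F\setminus X$ be partitionable into $k-\ell-1$ sets of $\mathcal{I}_i$---are precisely the conditions in \eqref{eq:fa} after this substitution. Hence, provided $F$ is itself partitionable into $k-\ell$ sets of $\mathcal{I}_i$ (the hypothesis of Theorem~\ref{th:TY1}, discussed at the end), Theorem~\ref{th:TY1} gives for each $i=1,2$ the equality ${\rm Conv}(\mathcal{F}_i^{\ell}(F))={\rm P}(\rho_i^F)\cap{\rm P}(((\rho_i^F)^{k-\ell-1})^\#)$, an integral polytope contained in $[0,1]^F$.

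For the inclusion \eqref{eq:fd}, monotonicity of the convex hull suffices. From $\mathcal{F}_1^{\ell}(F)\cap\mathcal{F}_2^{\ell}(F)\subseteq\mathcal{F}_i^{\ell}(F)$ I obtain ${\rm Conv}(\mathcal{F}_1^{\ell}(F)\cap\mathcal{F}_2^{\ell}(F))\subseteq{\rm Conv}(\mathcal{F}_i^{\ell}(F))$ for each $i$, and intersecting these two inclusions and invoking the per-matroid equalities yields exactly the four-fold intersection on the right of \eqref{eq:fd}. This inclusion is in fact unconditional, since each $X\in\mathcal{F}_i^{\ell}(F)$ has $\chi_X\in{\rm P}(\rho_i^F)$ (because $X\in\mathcal{I}_i$) and $\chi_X\in{\rm P}(((\rho_i^F)^{k-\ell-1})^\#)$ (because $F\setminus X$ is partitionable into $k-\ell-1$ sets of $\mathcal{I}_i$), so no assumption on $F$ is needed here.

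Both integrality assertions rest on one elementary observation: a $0$-$1$ vector $y$ in the convex hull of characteristic vectors of a family $\mathcal{G}$ must equal $\chi_{X_0}$ for some $X_0\in\mathcal{G}$. Indeed, writing $y=\sum_{j}\lambda_j\chi_{X_j}$ with all $\lambda_j>0$, the coordinates of $y$ equal to $0$ force $e\notin X_j$ and those equal to $1$ force $e\in X_j$ for every $j$, so each $X_j$ equals $\{e:y_e=1\}$. For the first assertion, an integral point of the four-fold intersection is (that intersection lying in $[0,1]^F$) a $0$-$1$ vector $\chi_{X^*}$ belonging to ${\rm P}(\rho_i^F)\cap{\rm P}(((\rho_i^F)^{k-\ell-1})^\#)={\rm Conv}(\mathcal{F}_i^{\ell}(F))$ for each $i$; the observation then gives $X^*\in\mathcal{F}_i^{\ell}(F)$ for both $i$, i.e.\ $X^*\in\mathcal{F}_1^{\ell}(F)\cap\mathcal{F}_2^{\ell}(F)$. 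For the second assertion, if the four-fold intersection $Q$ is integral then, being bounded and contained in $[0,1]^F$, it equals the convex hull of its $0$-$1$ points; by the first assertion these are precisely $\{\chi_X:X\in\mathcal{F}_1^{\ell}(F)\cap\mathcal{F}_2^{\ell}(F)\}$, so $Q={\rm Conv}(\mathcal{F}_1^{\ell}(F)\cap\mathcal{F}_2^{\ell}(F))$ and \eqref{eq:fd} becomes an equality.

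The step I expect to need the most care is the final existence claim, because producing a set $X\in\mathcal{F}_1^{\ell}(F)\cap\mathcal{F}_2^{\ell}(F)$ requires $Q\neq\emptyset$ in addition to integrality. Two points deserve emphasis. First, integrality here is a genuine hypothesis rather than a consequence: each single-matroid factor ${\rm P}(\rho_i^F)\cap{\rm P}(((\rho_i^F)^{k-\ell-1})^\#)$ is a generalized polymatroid and hence integral, but the intersection $Q$ of two generalized polymatroids need not be. Second, nonemptiness is exactly where the assumption $E\in\mathcal{I}_1^k\cap\mathcal{I}_2^k$ enters: whenever $F$ is partitionable into $k-\ell$ sets of each $\mathcal{I}_i$, the uniform vector $\tfrac{1}{k-\ell}\chi_F$ satisfies all four defining systems---just as $(\tfrac1k,\dots,\tfrac1k)$ does in Remark~\ref{re:1}---so that $Q\neq\emptyset$, and a nonempty bounded integral polyhedron has a vertex, which is a $0$-$1$ point and hence delivers the desired $X$. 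I would therefore record the partitionability of $F$ into $k-\ell$ sets of each $\mathcal{I}_i$ as the invariant under which the theorem is meant to be applied; for arbitrary disjoint common independent sets $X_1,\dots,X_\ell$ it can fail, but then $\mathcal{F}_1^{\ell}(F)\cap\mathcal{F}_2^{\ell}(F)$ and $Q$ are both empty and every assertion degenerates consistently.
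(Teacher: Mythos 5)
The note itself contains no proof of Theorem~\ref{th:TY2}: it is imported from \cite{TakazawaYokoi2018}, introduced only with the phrase ``based on Theorem~\ref{th:TY1}''. Your reduction of the whole statement to Theorem~\ref{th:TY1} applied to the two restricted matroids is exactly that intended route, and most of your argument is sound: the unconditional proof of the inclusion (\ref{eq:fd}) via direct membership of each $\chi_X$, the lemma that a $0$-$1$ vector in the convex hull of characteristic vectors of a family must be the characteristic vector of a member, and your observation that the final existence claim needs $Q\neq\emptyset$ on top of integrality (supplied by the uniform vector $\tfrac{1}{k-\ell}\chi_F$, in the spirit of Remarks~\ref{re:1} and~\ref{re:2}) are all correct and well placed.

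The gap is in your treatment of the degenerate case, and it is not cosmetic. Your proof of the first integrality assertion passes through the equality ${\rm P}(\rho_i^F)\cap{\rm P}(((\rho_i^F)^{k-\ell-1})^\#)={\rm Conv}(\mathcal{F}_i^{\ell}(F))$, i.e.\ Theorem~\ref{th:TY1} for the restriction, which is available only when $F$ is partitionable into $k-\ell$ sets of $\mathcal{I}_i$. The theorem, however, allows \emph{arbitrary} disjoint common independent sets $X_1,\dots,X_\ell$, and then this partitionability can genuinely fail: take ${\bf M}_1={\bf M}_2$ the rank-two uniform matroid on four elements, $k=2$, and $X_1$ a singleton, so that $F$ has three elements and rank two. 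For exactly this case you fall back on the bare assertion that $Q$ is then empty; but that assertion is what needs proof, and it cannot be deduced from Theorem~\ref{th:TY1}, whose hypothesis fails precisely there. As written, the first assertion of Theorem~\ref{th:TY2} is therefore unproved whenever $F$ is not partitionable. The clean repair is to prove the first assertion directly, with no hypothesis on $F$ at all: by (\ref{eq:p10}) a $0$-$1$ point of ${\rm P}(\rho_i^F)$ is the characteristic vector of a set independent in the restriction, and by (\ref{eq:p12}) together with Remark~\ref{re:1} (the second polyhedron, intersected with $[0,1]^F$, is the convex hull of the co-spanning sets of the union matroid $({\bf M}_i^F)^{k-\ell-1}$) a $0$-$1$ point of ${\rm P}(((\rho_i^F)^{k-\ell-1})^\#)$ is the characteristic vector of a set $X$ with $F\setminus X$ partitionable into $k-\ell-1$ sets of $\mathcal{I}_i$. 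This gives $X^*\in\mathcal{F}_1^{\ell}(F)\cap\mathcal{F}_2^{\ell}(F)$ for every integral point of $Q$ unconditionally, and as a by-product shows that in the degenerate case $Q$ contains no integral point, so that case really is vacuous --- but this is a conclusion of the argument, not something you may posit. (Your side remark that an empty $Q$ is integral while no $X$ exists is a fair reading of the last sentence of the theorem as literally stated; it is indeed meant to be applied under the invariant maintained by the iteration of Remark~\ref{re:2}, as you say.)
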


\begin{remark}\label{re:2}
Note that 
${\rm P}(\rho_1^F)\cap{\rm P}(((\rho_1^F)^{k-\ell-1})^\#)$ and 
${\rm P}(\rho_2^F)\cap{\rm P}(((\rho_2^F)^{k-\ell-1})^\#)$ are integral 
for any matroids ${\bf M}_i$ $(i=1,2)$,  
due to Theorem \ref{th:TY1}, 
but their intersection does not necessarily contains an integral point. 
Since by the assumption that $E\in\mathcal{I}_1^k\cap\mathcal{I}_2^k$
the vector $(\tfrac{1}{k},\cdots,\tfrac{1}{k})$ belongs to the 
intersection of the four polyhedra in (\ref{eq:fd}) for $\ell=0$, 
if the intersection of the four polyhedra is integral (or more generally 
contains an integral point), 
there exists a set 
$X_1\in\mathcal{F}_1^{0}(F)\cap\mathcal{F}_2^{0}(F)$. 
Then for $F=E\setminus X_1$ we can apply the same arguments to find 
$X_2\in\mathcal{F}_1^{1}(F)\cap\mathcal{F}_2^{1}(F)$, and repeatedly
carry out this process to find a desired partition $\{X_1,\cdots,X_k\}$
into common independent sets. Also see the proof of Theorem~\ref{th:e1}.
\pend
\end{remark}

\begin{remark}\label{re:3}
Takazawa and Yokoi \cite{TakazawaYokoi2018} considered the case when 
the intersection of the first two polyhedra in (\ref{eq:fd}) is 
a generalized polymatroid and so is that of the last two. 
They showed that such matroids are given by  
laminar matroids (special gammoids) and the matroids 
without $(k+1)$-spanned elements 
considered by Kotlar and Ziv~\cite{KotlarZiv2005}. 
Since the nonempty intersection of two integral generalized polymatroids
is integral, 
they thus showed that every pair of matroids from among 
laminar matroids and the matroids without $(k+1)$-spanned elements 
considered by Kotlar and Ziv~\cite{KotlarZiv2005}
satisfies the assumptions of Theorem~\ref{th:TY2}, 
for which our problem ${\bf (P)}$ is 
efficiently solvable.
\pend
\end{remark}

\section{Nearly Uniform Partitions}\label{sec:nearly}

Let us further examine the generalized-polymatroid approach of Takazawa 
and Yokoi given by Theorems~\ref{th:TY1} and \ref{th:TY2}
for the problem of partitioning two matroids into common independent
sets.

\begin{theorem}\label{th:e1}
Let  ${\bf M}_i = (E,\mathcal{I}_i)$ $(i=1,2)$ be matroids and 
$k$ be a positive integer such that $E\in\mathcal{I}_1^k\cap\mathcal{I}_2^k$.
If every ${\rm P}(\rho_i^F)\cap{\rm P}(((\rho_i^F)^{k-\ell-1})^\#)$ 
for $i=1,2$ 
appearing in {\rm (\ref{eq:fd})} in Theorem {\rm \ref{th:TY2}} is 
an integral generalized polymatroid,
then there exists a nearly uniform partition 
of $E$ into $k$ common independent sets of 
${\bf M}_i = (E,\mathcal{I}_i)$ $(i=1,2)$.
\end{theorem}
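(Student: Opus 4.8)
The plan is to prove the statement by induction on $k$, removing at each step one common independent set whose cardinality is the ceiling of the current average and recursing on the remaining ground set. For $k=1$ the assumption $E\in\mathcal I_1\cap\mathcal I_2$ makes $\{E\}$ a (vacuously nearly uniform) partition, so take $k\ge 2$ and set $n=|E|$. The crux will be the following claim: there is a set $X_1\in\mathcal F_1^{0}(E)\cap\mathcal F_2^{0}(E)$ with $|X_1|=\lceil n/k\rceil$. Assuming it, put $F=E\setminus X_1$; since $X_1\in\mathcal F_i^{0}(E)$, the set $F$ splits into $k-1$ members of $\mathcal I_i$ for both $i$, so the restricted pair $(\mathbf M_1^{F},\mathbf M_2^{F})$ satisfies $F\in(\mathcal I_1^{F})^{k-1}\cap(\mathcal I_2^{F})^{k-1}$. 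The generalized-polymatroid hypothesis is inherited by this smaller instance, because the polyhedra $\mathrm P(\rho_i^{F'})\cap\mathrm P(((\rho_i^{F'})^{(k-1)-\ell'-1})^\#)$ arising for it (with $F'\subseteq F$ and $\ell'\in\{0,\dots,k-2\}$) are, after transitivity of restriction and the substitution $\ell=\ell'+1$, a sub-collection of those arising for $(\mathbf M_1,\mathbf M_2,k)$. The induction hypothesis then yields a nearly uniform partition of $F$ into $k-1$ common independent sets, and writing $n=qk+r$ with $0\le r<k$ one checks that $|X_1|=\lceil n/k\rceil\in\{q,q+1\}$ and $|F|=q(k-1)+\max(r-1,0)$, so that all parts of $F$ also have sizes in $\{q,q+1\}$; adjoining $X_1$ produces the desired nearly uniform partition of $E$.

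To prove the claim, set $P_i=\mathrm P(\rho_i)\cap\mathrm P((\rho_i^{k-1})^\#)$, which by Theorem~\ref{th:TY1} equals $\mathrm{Conv}(\mathcal F_i^{0}(E))$, and which by the hypothesis of the theorem is an integral generalized polymatroid for each $i$. Let $Q=P_1\cap P_2$. By Theorem~\ref{th:TY2} its integral points are exactly the characteristic vectors of the sets in $\mathcal F_1^{0}(E)\cap\mathcal F_2^{0}(E)$, and $Q$ is integral as the nonempty intersection of two integral generalized polymatroids (Remark~\ref{re:3}). The assumption $E\in\mathcal I_1^k\cap\mathcal I_2^k$ places $\tfrac1k\mathbf 1$ in $Q$ (Remark~\ref{re:2}), a point of coordinate sum $n/k$; hence $\min\{x(E):x\in Q\}\le n/k\le\max\{x(E):x\in Q\}=:M$. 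Because $Q$ is integral, $M$ is attained at a $0/1$ point and is an integer, so $M\ge\lceil n/k\rceil=:t$.

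It remains to produce an integral point of $Q$ of coordinate sum exactly $t$. The idea is to truncate by the total-sum bound $x(E)\le t$. One has $Q\cap\{x(E)\le t\}=P_1^{\le t}\cap P_2$, where $P_1^{\le t}$ is the truncation of $P_1$ to $\{x(E)\le t\}$; since the truncation of an integral generalized polymatroid by an integral total-sum bound is again an integral generalized polymatroid, $Q\cap\{x(E)\le t\}$ is an intersection of two integral generalized polymatroids and so is integral. As $Q$ is convex and meets both $\{x(E)=n/k\}$ and $\{x(E)=M\}$ with $n/k\le t\le M$, the value $t$ is attained on $Q$; therefore $t$ is the maximum of $x(E)$ over the integral polytope $Q\cap\{x(E)\le t\}$, and this maximum is attained at a $0/1$ point $\chi_{X_1}$. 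Thus $X_1\in\mathcal F_1^{0}(E)\cap\mathcal F_2^{0}(E)$ and $|X_1|=t=\lceil n/k\rceil$, proving the claim.

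The step I expect to be the main obstacle is this last integrality input, namely that imposing the total-sum bound keeps us within the reach of the theorem that the intersection of two integral generalized polymatroids is integral. Concretely one must check that $P_1^{\le t}$ really is an integral generalized polymatroid, i.e.\ that the paramodular pair defining $P_1$ stays paramodular after the total-sum constraint is added (which in general requires reshaping the bounding functions, not merely capping the value on $E$). An equivalent and perhaps more transparent formulation to aim for is the purely combinatorial assertion that the cardinalities of the sets in $\mathcal F_1^{0}(E)\cap\mathcal F_2^{0}(E)$ form a contiguous interval of integers containing $n/k$; this is the generalized-polymatroid counterpart of the contiguity of cardinalities of common independent sets of two matroids, and proving it by an augmenting/exchange argument is the single nonroutine ingredient needed.
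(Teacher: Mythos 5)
Your proposal is correct and follows essentially the same route as the paper's proof: both extract one common set at a time and hinge on the same two facts, namely that truncating a generalized polymatroid by integral total-sum bounds yields an integral generalized polymatroid, and that a nonempty intersection of two integral generalized polymatroids is integral. The step you single out as the main obstacle is precisely Fact 3 in the paper's Appendix (justified there via the isomorphic correspondence with base polyhedra), which the paper's own proof invokes; the only cosmetic difference is that the paper truncates \emph{both} polyhedra at every stage by the fixed two-sided band $\{x\in\mathbb{R}^F \mid \lambda^-\le x(F)\le\lambda^+\}$ with $\lambda^-=\lfloor|E|/k\rfloor$ and $\lambda^+=\lceil|E|/k\rceil$, certifying nonemptiness with the uniform vector $(\tfrac{1}{k-\ell},\cdots,\tfrac{1}{k-\ell})$, which avoids your exact-cardinality maximization step and the induction-on-$k$ bookkeeping.
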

\begin{proof}
Put $\lambda=|E|/k$ and define $\lambda^+=\lceil\lambda\rceil$ 
and $\lambda^-=\lfloor\lambda\rfloor$. 
It follows from Theorem~\ref{th:TY2} and  the assumptions of the present 
theorem that if we find $X_j$ for $j=1,\cdots,\ell$ by the procedure 
described in Remark~\ref{re:1}, then 
for each $i=1,2$ the polyhedron given by 
\begin{equation}
 {\rm P}(\rho_i^F)\cap{\rm P}(((\rho_i^F)^{k-\ell-1})^\#)
 \cap \{x\in\mathbb{R}^F\mid \lambda^-\le x(F)\le\lambda^+\} 
\end{equation}
is an integral generalized polymatroid (due to Fact 3 in Appendix) 
and contains the uniform vector 
$(\tfrac{1}{k-\ell},\cdots,\tfrac{1}{k-\ell})$ in $\mathbb{R}^F$
and hence there exists  
a set $X\in \mathcal{F}^\ell_1(F)\cap\mathcal{F}^\ell_2(F)$ with 
$\lambda^-\le |X|\le \lambda^+$.\footnote{Note that
we have initially $\lambda^-\le |E|/k\le\lambda^+$ and hence 
$\lambda^-\le|X_1|\le\lambda^+$, and then we have 
$\lambda^-\le(|E|-|X_1|)/(k-1)\le\lambda^+$. So 
we can show by induction that  for $F$ in (\ref{eq:fd})
we have $\lambda^-\le |F|/(k-\ell)\le\lambda^+$ for $\ell=0,1,\cdots,k-1$.} 
Hence there exists a partition $\{X_1,\cdots,X_k\}$ of $E$ 
into common independent sets of ${\bf M}_i$ $(i=1,2)$ such that 
$\lambda^-\le |X_j|\le\lambda^+$ for all $j=1,\cdots,k$.
\end{proof}

\begin{remark}\label{re:4}
Since laminar matroids and the matroids considered in \cite{KotlarZiv2005}
satisfy the assumptions required in Theorem \ref{th:e1} as shown by
Takazawa and Yokoi \cite{TakazawaYokoi2018}, for every pair of 
such matroids ${\bf M}_i=(E,\mathcal{I}_i)$ $(i=1,2)$ 
with $E\in\mathcal{I}_1^k\cap\mathcal{I}_2^k$ 
there exists a nearly uniform partition of $E$ into $k$ 
common independent sets.
\pend
\end{remark}

\begin{remark}\label{re:5}
Theorem~\ref{th:e1} can be given in a more general form as described in 
Theorem~\ref{th:TY2}. 
That is, it suffices to impose that the intersection of the four 
polyhedra in (\ref{eq:fd}) and 
$\{x\in\mathbb{R}^F\mid \lambda^-\le x(F)\le\lambda^+\}$ 
with $\lambda^-=\lfloor|E|/k\rfloor$ and $\lambda^+=\lceil|E|/k\rceil$ 
contains an integral point.
\pend
\end{remark}

For {\it general} matroids  ${\bf M}_i=(E,\mathcal{I}_i)$ $(i=1,2)$ 
we also have the following.
Define for each $i=1,2$
\begin{equation}\label{eq:f1}
 \mu^*_i=\min\{\mu\in\mathbb{Z}_{>0}\mid E\in\mathcal{I}_i^\mu\},
\end{equation}
which is the covering index for matroid ${\bf M}_i=(E,\mathcal{I}_i)$
$(i=1,2)$. 
A {\it subpartition} of $E$ is a set of disjoint subsets of $E$.

\begin{theorem}\label{th:e3}
Let  ${\bf M}_i = (E,\mathcal{I}_i)$ $(i=1,2)$ be arbitrary matroids and 
$k$ be a positive integer such that $E\in\mathcal{I}_1^k\cap\mathcal{I}_2^k$.
Suppose that $\mu^*_1\le\mu^*_2< k$.
Then there exists a nearly uniform subpartition 
$\{X_1,\cdots,X_{k-\mu^*_2-1}\}$ of $E$ such that 
\begin{itemize}
\item $X_\ell\in\mathcal{I}_1\cap\mathcal{I}_2$\ \ for\ \ 
$\ell=1,\cdots, k-\mu^*_2-1$,
\item $E\setminus(X_1\cup\cdots\cup X_{k-\mu_2^*-1})
\in \mathcal{I}_1^{\mu^*_2+1}\cap \mathcal{I}_2^{\mu^*_2+1}$.
\end{itemize}
\end{theorem}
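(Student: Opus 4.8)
The plan is to peel off $X_1,\dots,X_{k-\mu^*_2-1}$ one at a time, mimicking the iterative procedure used in Remark~\ref{re:2} and in the proof of Theorem~\ref{th:e1}, but exploiting the fact that for arbitrary matroids the co-spanning requirements hidden in the definition (\ref{eq:faa}) become \emph{vacuous} in the present range of parameters. Concretely, I would set $\lambda=|E|/k$, $\lambda^-=\lfloor\lambda\rfloor$, $\lambda^+=\lceil\lambda\rceil$, and maintain throughout the invariant $\lambda^-\le|F|/(k-\ell)\le\lambda^+$ for $F=E\setminus(X_1\cup\cdots\cup X_\ell)$, exactly as in the footnote of the proof of Theorem~\ref{th:e1}.

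First I would record a monotonicity property of the covering index: the covering index of any $S\subseteq E$ in $\mathbf{M}_i$ is at most $\mu^*_i$, so by the matroid covering theorem $|T|\le\mu^*_i\,\rho_i(T)$ for every $T\subseteq S$. Hence, whenever $k-\ell-1\ge\mu^*_i$, every $X\subseteq F$ has $F\setminus X$ of covering index at most $\mu^*_i\le k-\ell-1$, so that $F\setminus X\in\mathcal{I}_i^{\,k-\ell-1}$; in other words, $\mathcal{F}^{\ell}_i(F)$ in (\ref{eq:faa}) is simply the family of all members of $\mathcal{I}_i$ contained in $F$. Two consequences follow. The second bullet of the statement is automatic, because the final remainder, like every subset of $E$, has covering index at most $\mu^*_2<\mu^*_2+1$ in each matroid. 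And the membership $X\in\mathcal{F}^{\ell}_1(F)\cap\mathcal{F}^{\ell}_2(F)$ demanded by Theorem~\ref{th:TY2} collapses to ``$X$ is a common independent set contained in $F$''. The whole problem therefore reduces to extracting $k-\mu^*_2-1$ disjoint common independent sets whose sizes lie in $\{\lambda^-,\lambda^+\}$.

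The heart of the argument is to guarantee, at each step, a common independent set of the prescribed size inside $F$. Let $\nu(F)$ denote the maximum cardinality of a common independent set of $\mathbf{M}_1^F$ and $\mathbf{M}_2^F$. The matroid intersection theorem gives $\nu(F)=\min_{S\subseteq F}\bigl(\rho_1^F(S)+\rho_2^F(F\setminus S)\bigr)$, and combining this with the covering bounds $\rho_i^F(S)\ge|S|/\mu^*_i$ and the hypothesis $\mu^*_1\le\mu^*_2$ yields $\rho_1^F(S)+\rho_2^F(F\setminus S)\ge|S|/\mu^*_2+|F\setminus S|/\mu^*_2=|F|/\mu^*_2$, so that $\nu(F)\ge|F|/\mu^*_2$. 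Since the invariant together with $k-\ell\ge\mu^*_2+2$ gives $|F|\ge(k-\ell)\lambda^-\ge(\mu^*_2+2)\lambda^-$, this forces $\nu(F)>\lambda^-$ at every step, and $\nu(F)>\lambda^+$ at any step where the bookkeeping is forced to choose $|X|=\lambda^+$ (which, by the standard computation, occurs only when $|F|=(k-\ell)\lambda^+$). As any subset of a common independent set is again one, common independent sets of every cardinality from $0$ up to $\nu(F)$ exist in $F$, and in particular one of the size we require.

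Finally I would assemble these pieces as in the proof of Theorem~\ref{th:e1}: the induction of the cited footnote lets me always select a size $s\in\{\lambda^-,\lambda^+\}$ lying in the window $[\,|F|-(k-\ell-1)\lambda^+,\ |F|-(k-\ell-1)\lambda^-\,]$ that preserves the invariant, and the previous paragraph realises $s$ by some common independent set $X_{\ell+1}\subseteq F$; iterating over $\ell=0,1,\dots,k-\mu^*_2-2$ produces the required nearly uniform subpartition, with the remainder satisfying the second bullet automatically. I expect the main obstacle to be the step in the third paragraph. In the single-matroid setting of Theorem~\ref{th:TY1}, or in the generalized-polymatroid case of Theorem~\ref{th:e1} (where Fact~3 of the Appendix keeps the truncated polyhedron integral), a set of exactly the right size is immediate; but common independent sets of two matroids do not form a matroid, so here the existence of one of the correct cardinality must be obtained indirectly, through the min-max formula together with the covering-index estimate $\rho_i^F(S)\ge|S|/\mu^*_i$. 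This is precisely the point at which the hypothesis $\mu^*_1\le\mu^*_2<k$ enters.
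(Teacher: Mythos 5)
Your proof is correct, and its first half coincides with the paper's own: both you and the paper observe that for $\ell\le k-\mu^*_2-2$ one has $k-\ell-1\ge\mu^*_2\ge\mu^*_i$, so the co-spanning requirement in (\ref{eq:faa}) is vacuous, $\mathcal{F}^{\ell}_i(F)$ collapses to the family of independent sets of ${\bf M}_i$ contained in $F$, and the second bullet of the theorem holds automatically. Where you genuinely diverge is the key step, namely producing inside $F$ a common independent set of prescribed size $s\in\{\lambda^-,\lambda^+\}$. The paper stays inside its generalized-polymatroid framework and simply adapts the iterative argument of Theorem~\ref{th:e1} (its proof cites Theorem~\ref{th:TY2}, meaning the procedure of Remark~\ref{re:2}): once the co-spanning constraints disappear, the intersection of the four polyhedra in (\ref{eq:fd}) is just the intersection of the two matroid polytopes of ${\bf M}_1^F$ and ${\bf M}_2^F$; truncating each by $\lambda^-\le x(F)\le\lambda^+$ gives two integral generalized polymatroids (Fact 3), whose intersection is integral and contains the uniform vector $(\tfrac{1}{k-\ell},\cdots,\tfrac{1}{k-\ell})$ (this is where $\mu^*_2<k-\ell$ enters for the paper), hence contains a characteristic vector of admissible size. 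You instead invoke Edmonds' matroid intersection min--max formula together with the covering bounds $\rho_i^F(S)\ge|S|/\mu^*_i$ and $\mu^*_1\le\mu^*_2$ to get $\nu(F)\ge|F|/\mu^*_2$ for the maximum size $\nu(F)$ of a common independent set in $F$, check that this exceeds the required size (correctly noting that $s=\lambda^+$ is forced only when $|F|=(k-\ell)\lambda^+$), and then hit the size exactly by downward closure of common independent sets. Your route is more elementary and self-contained --- no generalized polymatroids, no polyhedral integrality, only the classical min--max theorem --- and it makes quantitatively explicit where the hypothesis $\mu^*_1\le\mu^*_2<k$ is used; the paper's route is a one-line adaptation given the machinery already built, and it yields a set of admissible size directly from fractional feasibility, with no forced/unforced case analysis. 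Two small points to polish in your write-up: when $\lambda^-=0$ the strict inequality $\nu(F)>\lambda^-$ can fail only for $F=\emptyset$, where empty sets suffice; and your selection rule should explicitly take $s=\lambda^-$ except in the forced case, since those are the only two situations your bounds on $\nu(F)$ certify.
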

\begin{proof}
For $\ell=1,\cdots,k-\mu_2^*-1$, 
under the assumption of the present theorem, for each $i=1,2$ we have 
$\emptyset \in \mathcal{F}_i^\ell(F)$ in (\ref{eq:faa}), so that  
 $\mathcal{F}_i^\ell(F)$ is actually $\mathcal{I}_i$.
Hence the argument in the proof of Theorem \ref{th:TY2} can be 
adapted for obtaining a nearly uniform subpartition 
$\{X_1,\cdots,X_{k-\mu_2^*-1}\}$ of $E$ satisfying the conditions of 
the present theorem.
\end{proof}

Similarly we can show the following, a corollary of Theorem~\ref{th:TY1},
which may be folklore.

\begin{corollary}\label{cor:2}
For an arbitrary matroid ${\bf M}=(E,\mathcal{I})$ with $E\in\mathcal{I}^k$ 
there exists a nearly uniform partition of $E$ into $k$ independent sets
of ${\bf M}$.
\end{corollary}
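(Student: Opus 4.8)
The plan is to follow the same recursive scheme as in the proof of Theorem~\ref{th:e1}, specialized to a single matroid. The essential simplification is that for one matroid the relevant region is \emph{already} an integral generalized polymatroid by Theorem~\ref{th:TY1}, so there is no hypothesis about an intersection of two generalized polymatroids to impose: the single-matroid setting automatically satisfies the structural assumption of Theorem~\ref{th:e1}.

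Concretely, I would set $\lambda=|E|/k$, $\lambda^+=\lceil\lambda\rceil$, and $\lambda^-=\lfloor\lambda\rfloor$. By Theorem~\ref{th:TY1}, $\mathrm{Conv}(\mathcal{F})=\mathrm{P}(\rho)\cap\mathrm{P}((\rho^{k-1})^\#)$; being the intersection of a submodular polyhedron and a supermodular polyhedron, this is a generalized polymatroid, and it is integral since it equals the convex hull of the characteristic vectors $\chi_X$ for $X\in\mathcal{F}$. Intersecting it with the slab $\{x\in\mathbb{R}^E\mid\lambda^-\le x(E)\le\lambda^+\}$ yields, by Fact~3 in the Appendix, again an integral generalized polymatroid. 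By Remark~\ref{re:1} the uniform vector $(\tfrac1k,\dots,\tfrac1k)$ lies in $\mathrm{Conv}(\mathcal{F})$, and its coordinate sum equals $|E|/k=\lambda\in[\lambda^-,\lambda^+]$, so it also lies in the slab. Hence the intersected polyhedron is nonempty and integral, and therefore contains a characteristic vector $\chi_X$ of some $X\in\mathcal{F}$ with $\lambda^-\le|X|\le\lambda^+$.

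It then remains to recurse exactly as in Remark~\ref{re:1}: replace $E$ by $E\setminus X$, $k$ by $k-1$, and ${\bf M}$ by its restriction to $E\setminus X$, and repeat. As in the footnote to Theorem~\ref{th:e1}, the bounds propagate through the recursion: from $\lambda^-\le|E|/k\le\lambda^+$ together with $\lambda^-\le|X|\le\lambda^+$ one deduces $\lambda^-\le(|E|-|X|)/(k-1)\le\lambda^+$, so the same $\lambda^\pm$ control every step. Carrying this out for $k$ stages produces a partition $\{X_1,\dots,X_k\}$ of $E$ into independent sets with $\lambda^-\le|X_j|\le\lambda^+$ for all $j$; since $\lambda^+-\lambda^-\le 1$, this partition is nearly uniform.

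I do not expect a genuine obstacle here. The only point requiring any verification is the propagation of the cardinality bounds through the recursion, which is the elementary inductive computation indicated above; everything else is a direct specialization of Theorem~\ref{th:e1}, with the integral-generalized-polymatroid hypothesis supplied automatically by Theorem~\ref{th:TY1}.
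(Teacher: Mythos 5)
Your argument collapses at the one step that carries all the weight: the claim that $\mathrm{P}(\rho)\cap\mathrm{P}((\rho^{k-1})^\#)$ is a generalized polymatroid simply because it is the intersection of a submodular polyhedron and a supermodular polyhedron. That is not the definition: by the Appendix, $Q(f,g)=\mathrm{P}(f)\cap\mathrm{P}(g)$ is a generalized polymatroid only when the pair $(f,g)$ satisfies the cross-inequality (\ref{eq:p8}), and this paramodularity is precisely the nontrivial, \emph{per-matroid} hypothesis of Theorem~\ref{th:e1} --- it is not supplied automatically by Theorem~\ref{th:TY1}. What Theorem~\ref{th:TY1} (and Remark~\ref{re:2}) gives you for free is \emph{integrality} of this polyhedron; the generalized-polymatroid property is exactly what Remark~\ref{re:3} says Takazawa and Yokoi had to \emph{prove} for the special classes of laminar matroids and Kotlar--Ziv matroids. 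Indeed, if your claim were true, the hypothesis of Theorem~\ref{th:e1} would hold for every pair of matroids, contradicting the paper's explicit remark after this Corollary that for two matroids additional conditions are needed. So you have conflated ``integral'' (automatic) with ``integral generalized polymatroid'' (not automatic), and without the latter you cannot invoke Fact~3; intersecting a merely integral polytope with the slab $\{\lambda^-\le x(E)\le\lambda^+\}$ does not preserve integrality in general.

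The claim is in fact false, not just unjustified: take ${\bf M}$ the graphic matroid of $K_4$ and $k=2$. Then $\mathcal{F}$ consists of the spanning trees whose complements are spanning trees, i.e., the twelve Hamiltonian paths, so $\mathrm{Conv}(\mathcal{F})$ lies in the hyperplane $x(E)=3$; were it a generalized polymatroid it would be a (0--1, integral) base polyhedron, hence a matroid base polytope, so $\mathcal{F}$ would satisfy base exchange. It does not: for $B_1=\{12,23,34\}$, $B_2=\{13,34,24\}\in\mathcal{F}$ and $12\in B_1\setminus B_2$, both candidates fail, since $B_1-12+13=\{13,23,34\}$ is a star whose complement is the triangle $\{12,14,24\}$, and $B_1-12+24\supseteq\{23,24,34\}$ is itself a triangle. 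The Corollary is nevertheless true, and can be rescued within the paper's scheme by a property you never use: because $\mathcal{I}^{k-1}$ is closed under taking subsets, $\mathcal{F}$ is closed under matroid augmentation --- if $X,Y\in\mathcal{F}$ and $|X|<|Y|$, there is $e\in Y\setminus X$ with $X+e\in\mathcal{I}$, and $E\setminus(X+e)\subseteq E\setminus X\in\mathcal{I}^{k-1}$, so $X+e\in\mathcal{F}$. Hence the cardinalities occurring in $\mathcal{F}$ form an interval of integers; since $(\tfrac1k,\dots,\tfrac1k)\in\mathrm{Conv}(\mathcal{F})$ forces members of $\mathcal{F}$ of size $\le|E|/k$ and of size $\ge|E|/k$, some $X\in\mathcal{F}$ satisfies $\lambda^-\le|X|\le\lambda^+$, and then your recursion and cardinality bookkeeping go through verbatim. (Alternatively, the folklore exchange argument --- repeatedly moving an element from a largest part to a smallest part of any partition into $k$ independent sets, using the augmentation axiom --- proves the Corollary with no polyhedra at all.)
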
 

It should be noted that Corollary \ref{cor:2} holds for any general matroid
${\bf M}=(E,\mathcal{I})$ with $E\in\mathcal{I}^k$, but for two matroids 
${\bf M}_i=(E,\mathcal{I}_i)$ $(i=1,2)$ 
with $E\in\mathcal{I}_1^k\cap\mathcal{I}_2^k$ we need additional conditions
to guarantee the existence of a nearly uniform partition of $E$ into
common independent sets, in general, such as those 
given in Theorem~\ref{th:e1}.

\section{Concluding Remarks}\label{sec:4}

We have shown that under the same assumption in \cite{TakazawaYokoi2018}
that makes the generalized-polymatroid approach of Takazawa and Yokoi 
work, there also exists a nearly uniform partition into common 
independent sets.

It is interesting to identify the class of pairs of matroids 
for which every intersection of the four polyhedra in (\ref{eq:fd}) 
is integral and computationally tractable, 
 which is left open.
Besides the way of using generalized polymatroids in \cite{TakazawaYokoi2018}
there may be the case when the intersection of the first and the fourth 
polyhedra in (\ref{eq:fd}) is a generalized polymatroid and so is 
the intersection of the second and the third.

\section*{Acknowledgements}

S. Fujishige is supported 
by JSPS KAKENHI Grant Numbers 19K11839, Japan.
K. Takazawa is partially supported by
JST CREST Grant Number JPMJCR1402 and 
 JSPS KAKENHI Grant Numbers JP16K16012 and JP26280004, Japan.
Y. Yokoi is supported by JST CREST Grant Number JPMJCR14D2 and 
 JSPS KAKENHI Grant Number JP18K18004, Japan.

\appendix

\section{Fundamental Facts about Matroids 
and Submodular 
Functions}

We briefly give some definitions and fundamental facts 
about matroids, polymatroids, generalized polymatroids, and 
submodular/supermodular functions from a polyhedral point of views, 
which are used in the present paper. 
For general information relevant to the subject of this paper 
see \cite{Frank2011,Fuji2005,Oxley2011,Schrijver2003,Welsh1976}
(the notations used here mostly follow \cite{Fuji2005}).

Let $E$ be a nonempty finite set and ${\bf M}=(E,\mathcal{I})$ be 
a {\it matroid} on $E$ with a family of {\it independent sets} 
(we omit the axioms for independent sets). A maximal independent
set is called a {\it base}. A set $X\subseteq E$ is called a {\it spanning set}
 of ${\bf M}$ if there exists a base $B$ of ${\bf M}$ such that $B\subseteq X$.
A set function $\rho: 2^E\to\mathbb{Z}_{\ge 0}$ defined by 
\begin{equation}\label{eq:p1}
 \rho(X)=\max\{|Y|\mid Y\subseteq X,\ Y\in\mathcal{I}\}
\end{equation}
is called the {\it rank function} of ${\bf M}$. The rank function $\rho$ 
satisfies the {\it submodularity} inequalities
\begin{equation}\label{eq:sf}
  \rho(X)+\rho(Y)\ge \rho(X\cup Y)+\rho(X\cap Y)
\qquad (\forall X, Y \subseteq E).
\end{equation}
Matroid ${\bf M}$ is uniquely determined by each of 
the family of independent sets, the family of bases, the family of 
spanning sets, and the rank function, associated with ${\bf M}$. 
The family of complements $E\setminus B$ of all bases $B$ of ${\bf M}$
is the family of bases of a matroid on $E$, which is called the 
{\it dual} matroid of ${\bf M}$ is denoted by ${\bf M}^*$. 
For the rank function $\rho$ of ${\bf M}$
we denote the rank function of the dual matroid ${\bf M}^*$ by $\rho^*$.
The dual rank function $\rho^*$ is given by
\begin{equation}\label{eq:p2}
 \rho^*(X)=|X|-\rho(E)+\rho(E\setminus X)\qquad (\forall X\subseteq E).
\end{equation}

Any set function $f: 2^E\to\mathbb{R}$ is called a {\it submodular 
function} if it satisfies the submodularity inequalities (\ref{eq:sf})
with $\rho$ being replaced by $f$. The negative of a submodular function
is called a {\it supermodular function}.
Given a submodular function $f: 2^E\to\mathbb{R}$ with $f(\emptyset)=0$, 
the {\it submodular 
polyhedron} associated with $f$ is defined by
\begin{equation}\label{eq:p3}
  {\rm P}(f)=\{x\in\mathbb{R}^E\mid \forall X\subseteq E: x(X)\le f(X)\},
\end{equation}
where $x(X)=\sum_{e\in X}x(e)$. 
(When ${\rm P}(f)\cap\mathbb{R}_{\ge 0}^E\neq\emptyset$, it is called a {\it 
polymatroid} and there uniquely exists a monotone nondecreasing submodular
function $f'$ such that ${\rm P}(f)\cap\mathbb{R}_{\ge 0}^E=
{\rm P}(f')\cap\mathbb{R}_{\ge 0}^E$.)
Also the {\it base polyhedron} associated with $f$ is defined by
\begin{equation}\label{eq:p4}
  {\rm B}(f)=\{x\in{\rm P}(f)\mid x(E)=f(E)\}.
\end{equation}
In a dual manner, given a supermodular function $g: 2^E\to\mathbb{R}$
with $g(\emptyset)=0$, 
the {\it supermodular polyhedron} associated with $g$ is defined by
\begin{equation}\label{eq:p5}
  {\rm P}(g)=\{x\in\mathbb{R}^E\mid \forall X\subseteq E: x(X)\ge g(X)\}
\end{equation}
and the associated base polyhedron by
\begin{equation}\label{eq:p6}
  {\rm B}(g)=\{x\in{\rm P}(g)\mid x(E)=g(E)\}.
\end{equation}
For a submodular function $f: 2^E\to\mathbb{R}$ with $f(\emptyset)=0$
the {\it dual} supermodular 
function $f^\#: 2^E\to\mathbb{R}$ is defined by
\begin{equation}\label{eq:p7}
 f^\#(X)=f(E)-f(E\setminus X)\qquad (\forall X\subseteq E).
\end{equation}
We have ${\rm B}(f)={\rm B}(f^\#)$. Note that $(f^\#)^\#=f$.

For a submodular function $f: 2^E\to\mathbb{R}$ and a supermodular function
$g: 2^E\to\mathbb{R}$ with $f(\emptyset)=g(\emptyset)=0$, if we have
\begin{equation}\label{eq:p8}
  f(X)-g(Y)\ge f(X\setminus Y)-g(Y\setminus X)\qquad (\forall X, Y\subseteq E),
\end{equation}
then the polyhedron $Q(f,g)\equiv{\rm P}(f)\cap{\rm P}(g)$ is called 
a {\it generalized polymatroid}. Every polymatroid is a generalized 
polymatroid.

When $f$ and $g$ are integer-valued, 
all the polyhedra ${\rm P}(f)$, ${\rm P}(g)$, ${\rm B}(f)$, and $Q(f,g)$
are integral. Moreover, given another integer-valued 
submodular $f'$ and supermodular $g'$,
the intersections ${\rm P}(f)\cap{\rm P}(f')$, ${\rm P}(g)\cap{\rm P}(g')$, 
${\rm B}(f)\cap{\rm B}(f')$, and $Q(f,g)\cap Q(f',g')$, if nonempty, are 
integral polyhedra.

For any generalized polymatroid $Q(f,g)$, letting $\hat{e}$ be a new element
and putting $\hat{E}=E\cup\{\hat{e}\}$, for an arbitrary $t\in\mathbb{R}$
define $\hat{f}:\hat{E}\to\mathbb{R}$ by $\hat{f}(\hat{E})=t$ and  
\begin{equation}\label{eq:p9}
  \hat{f}(X)=\left\{
   \begin{array}{ll}
     f(X) & {\rm if\ } \hat{e}\notin X \\
     g(\hat{E}\setminus X) & {\rm if\ } \hat{e}\in X
   \end{array}
   \right.
   \quad (\forall X\subset\hat{E}).
\end{equation}
Then $\hat{f}$ is a submodular function and the projection of the base 
polyhedron ${\rm B}(\hat{f})\subset \mathbb{R}^{\hat{E}}$ 
along the axis $\hat{e}$ into the 
coordinate subspace $\mathbb{R}^E$ is a generalized polymatroid $Q(f,g)$. 
Every generalized polymatroid is obtained in this way and vice versa.
This is an isomorphic correspondence.

For a submodular function $f$, a supermodular function $g$, and 
vectors $l\in(\mathbb{R}\cup\{-\infty\})^E$ and 
$u\in(\mathbb{R}\cup\{+\infty\})^E$ with $l(e)\le u(e)$ for all $e\in E$
we have the following three:
\begin{description}
\item[Fact 1.]
 ${\rm P}(f)^u\equiv\{x\in{\rm P}(f)\mid x\le u\}$ is a submodular 
polyhedron. 
\item[Fact 2.] ${\rm P}(g)_l\equiv\{x\in{\rm P}(g)\mid x\ge l\}$ is a 
supermodular polyhedron.  
\item[Fact 3.] ${\rm B}(f)_l^u\equiv\{x\in{\rm B}(f)\mid l\le x\le u\}$, if nonempty,
is a base polyhedron. 
(In particular, this implies that for a generalized polymatroid $Q(f,g)$ and 
$\alpha, \beta\in\mathbb{R}$ with $\alpha\le \beta$,\  
$Q(f,g)_\alpha^\beta\equiv\{x\in Q(f,g)\mid \alpha\le x(E)\le \beta\}$,
if nonempty, is a generalized polymatroid, due to the isomorphic  
correspondence between base polyhedra and generalized polymatroids.)
\end{description}
These polyhedra are integral if $f$ and $g$ are 
integer-valued and finite $l(e)$s and $u(e)$s are integers.

For any $\alpha\ge 0$, defining $f_{-\alpha}(E)=f(E)-\alpha$ and 
$f_{-\alpha}(X)=f(X)$ for all $X\in 2^E\setminus\{E\}$, we get 
another submodular function, which we call an {\it $\alpha$-truncation} of $f$.
In a dual manner, defining $g_{+\alpha}(E)=g(E)+\alpha$ and 
$g_{+\alpha}(X)=g(X)$ for all $X\in 2^E\setminus\{E\}$, we get 
another supermodular function, called an {\it $\alpha$-truncation} of $g$.
(Note that truncations can be interpreted as operations on generalized 
polymatroids 
(cf.~the above-mentioned Fact 3)).

For any family $\mathcal{F}$ of subsets of $E$ denote by 
${\rm Conv}(\mathcal{F})$ the convex hull of characteristic vectors 
$\chi_X\in\mathbb{R}^E$ for all $X\in\mathcal{F}$, where 
$\chi_X(e)=1$ if $e\in X$ and $=0$ if $e\in E\setminus X$.

Let ${\bf M}=(E,\mathcal{I})$ be a matroid with a rank function $\rho$.
Then we have
\begin{equation}\label{eq:p10}
  {\rm Conv}(\mathcal{I})= {\rm P}(\rho)\cap [0,1]^E,
\end{equation}
which is called a {\it matroid polytope} and denoted by ${\rm P}_{(+)}(\rho)$.
Let $\mathcal{S}$ be the set of spanning sets of ${\bf M}$. Then,
\begin{equation}\label{eq:p11}
  {\rm Conv}(\mathcal{S})= {\rm P}(\rho^\#)\cap [0,1]^E,
\end{equation}
where $\rho^\#$ is the dual supermodular function of $\rho$. 
Define $\bar{\mathcal{I}}=\{E\setminus X\mid X\in\mathcal{I}\}$, which 
is the family of {\it co-spanning} sets of ${\bf M}$, i.e., 
the family of spanning sets of the dual matroid ${\bf M}^*$.
Then we have
\begin{equation}\label{eq:p12}
  {\rm Conv}(\bar{\mathcal{I}})= {\rm P}((\rho^*)^\#)\cap [0,1]^E,
\end{equation}
where $\rho^*$ is the rank function of the dual matroid ${\bf M}^*$.
It follows from (\ref{eq:p2}) and (\ref{eq:p7}) that 
\begin{equation}\label{eq:p13}
  (\rho^*)^\#(X)=|X|-\rho(X) \qquad (\forall X\subseteq E).
\end{equation}
Finally, for any positive integer $k$ define 
\begin{equation}\label{eq:p14}
  \mathcal{I}^k = \{X_1\cup\cdots\cup X_k\mid 
    \forall j\in\{1,\cdots,k\}: X_j\in\mathcal{I}\},
\end{equation}
where note that imposing the condition 
that $X_j\in\mathcal{I}$ $(j=1,\cdots,k)$ are disjoint gives the same 
$\mathcal{I}^k$. The pair $(E,\mathcal{I}^k)$ is a matroid, called 
a {\it union matroid} of $k$ copies of ${\bf M}$, which we denote by
${\bf M}^k$. The rank function $\rho^k$ of ${\bf M}^k$ is given by
\begin{equation}\label{eq:p15}
 \rho^k(X)=\min\{|E\setminus X|+k\rho(X)\mid X\subseteq E\}.
\end{equation}

\end{document}